\title{An Elementary Proof of a Minimax Theorem}
\author{Jeff Calder}
\affil{School of Mathematics \\ University of Minnesota\thanks{{\bf Email} \textit{jwcalder@umn.edu}, {\bf Funding:} This work was supported by NSF grant 2436333, and an Albert and Dorothy Marden Professorship.}}
\begin{document} 
\maketitle

\begin{abstract}
Here, we give a self-contained and elementary proof of a minimax theorem due to Fan \cite{fan1953minimax} in a simplified setting that can be taught in an advanced undergraduate course. Our proof follows \cite{nikaido1959method} with some simplifications. 
\end{abstract}

\section{Introduction}

Let $X\subset \R^d$, $Y\subset \R^k$, and $f:X\times Y \to \R$.
We always assume $X$ and $Y$ are nonempty. We are concerned here with \emph{minimax theorems}, which give conditions under which it holds that
\begin{equation}\label{eq:minimax}
\sup_{\x\in X}\inf_{\y\in Y} f(\x,\y) = \inf_{\y\in Y}\sup_{\x\in X} f(\x,\y).
\end{equation}
The purpose of this note is to provide an elementary proof of a minimax theorem that is accessible to undergraduate students.  Minimax theorems date back to von Neumann \cite{v1928theorie} in 1928, who established the first minimax theorem in the special case that $f(\x,\y) = \x^TA\y$ and $X$ and $Y$ are the spaces of probability vectors in $\R^d$ and $\R^k$, respectively. This initial setting arose in two player zero sum games.  Since then, many extensions to more general functions $f(\x,\y)$ have been given (see, e.g., \cite{simons1995minimax} and references therein), and minimax theorems have found powerful applications in primal dual optimization. 

One way to think about the difference between the two sides of \eqref{eq:minimax} is in terms of two player games, where the first player is choosing $\x\in X$ to maximize $f(\x,\y)$, while the second player is choosing $\y\in Y$ to minimize $f(\x,\y)$. When the supremum is on the outside, like on the left hand side of \eqref{eq:minimax}, this amounts to the second player getting to observe the first player's choice $\x$ before making their choice $\y$. This confers an advantage to the second player. Vice versa, when the infimum is on the outside, like on the right hand side of \eqref{eq:minimax}, the first player gets to observe the second player's choice $\y$ before making their choice of $\x$, which confers an advantage to the first player. Thus, we expect to always see an inequality, which is verified in the following result.
\begin{lemma}\label{lem:minimaxless}
Let $X\subset \R^d$, $Y\subset \R^k$, and $f:X\times Y \to \R$.
Then 
\begin{equation}\label{eq:minimaxless}
\sup_{\x\in X} \inf_{\y\in Y} f(\x,\y) \leq \inf_{\y\in Y} \sup_{\x\in X} f(\x,\y).
\end{equation}
\end{lemma}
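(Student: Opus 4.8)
The plan is to prove the inequality directly from the definitions of supremum and infimum, with no topological or structural assumptions needed. The key observation is the following elementary fact: for any fixed $\x_0 \in X$ and any fixed $\y_0 \in Y$, we have
\begin{equation*}
\inf_{\y\in Y} f(\x_0,\y) \leq f(\x_0,\y_0) \leq \sup_{\x\in X} f(\x,\y_0).
\end{equation*}
The left inequality holds because $f(\x_0,\y_0)$ is one of the values over which the infimum on the left is taken, and the right inequality holds because $f(\x_0,\y_0)$ is one of the values over which the supremum on the right is taken.

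From this chain, the crucial conclusion is that $\inf_{\y\in Y} f(\x_0,\y) \leq \sup_{\x\in X} f(\x,\y_0)$ holds for \emph{every} pair $(\x_0,\y_0)$. First I would fix $\y_0$ and take the supremum over $\x_0 \in X$ on the left, yielding $\sup_{\x\in X}\inf_{\y\in Y} f(\x,\y) \leq \sup_{\x\in X} f(\x,\y_0)$, valid for every $\y_0 \in Y$. Then, since the left-hand side is now a fixed constant not depending on $\y_0$, I would take the infimum over $\y_0 \in Y$ of the right-hand side to obtain $\sup_{\x\in X}\inf_{\y\in Y} f(\x,\y) \leq \inf_{\y\in Y}\sup_{\x\in X} f(\x,\y)$, which is \eqref{eq:minimaxless}.

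There is essentially no obstacle here; the only point requiring a little care is that the suprema and infima could in principle be $\pm\infty$, so I would either note that the argument goes through in the extended reals, or (as is implicitly intended in this elementary setting) assume $f$ is such that the relevant quantities are finite. I would also want to state explicitly the monotonicity principle being used — namely, that if $g(\x) \leq h(\x)$ for all $\x \in X$ then $\sup_\x g(\x) \leq \sup_\x h(\x)$, and similarly for infima — since for an undergraduate audience this is the one step that deserves a sentence of justification rather than being taken for granted.
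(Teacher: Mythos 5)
Your proof is correct and is essentially the same elementary argument as the paper's: the paper fixes $\x_0$, notes $\inf_{\y\in Y} f(\x_0,\y) \leq \inf_{\y\in Y}\sup_{\x\in X} f(\x,\y)$, and takes the supremum over $\x_0$, which is just a slightly compressed version of your two-step sup/inf passage from the pointwise inequality. Your extra remarks on extended-real values and the monotonicity of sup/inf are fine but not needed for the statement as given.
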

\begin{proof}
For any $\x_0\in X$ we have
\[\inf_{\y\in Y} f(\x_0,\y) \leq \inf_{\y\in Y} \sup_{\x\in X} f(\x,\y).\]
Taking the supremum over $\x_0\in X$ gives \eqref{eq:minimaxless}.
\end{proof}
By Lemma \ref{lem:minimaxless}, to prove a minimax theorem, it is sufficient to show the opposite inequality, namely that
\begin{equation}\label{eq:opp_ineq}
\inf_{\y\in Y} \sup_{\x\in X} f(\x,\y) \leq \sup_{\x\in X} \inf_{\y\in Y} f(\x,\y).
\end{equation}
However, this direction is much more difficult to prove, and does not always hold. 
\begin{example}\label{Ex:difference}
Let $X = \{-1,1\}\subset \R$, $Y = [-1,1]\subset \R$, and $f(x,y) = xy$. Then 
\[\max_{x\in X}\min_{y\in Y}f(x,y) = \max_{x\in X} (-1) = -1,\]
while 
\[\min_{y\in Y}\max_{x\in X}f(x,y) = \min_{y\in Y} |y| = 0.\]
\end{example}
As we shall see below, convexity of the sets $X$ and $Y$ play a key role in minimax theorems. Note that $X=\{-1,1\}$ is not a convex set.

\section{Minimax and Saddle Points}

We now introduce some notation. We say that $\xstar\in X$ is an \emph{outer max} of $f$ if the supremum on the left hand side of \eqref{eq:minimax} is attained at $\xstar$. Likewise, we say $\ystar\in Y$ is an \emph{outer min} of $f$ if the infimum on the right hand side of \eqref{eq:minimax} is attained at $\ystar$.  We also define the notion of a \emph{saddle point} of $f$.
\begin{definition}\label{def:saddle_point}
A pair $(\xstar,\ystar)\in X\times Y$ is called a \emph{saddle point} of $f$ if 
\begin{equation}\label{eq:saddle}
f(\x,\ystar) \leq f(\xstar,\ystar) \leq f(\xstar,\y) \ \ \text{for all} \ \ (\x,\y)\in X\times Y.
\end{equation}
\end{definition}
A saddle point $(\xstar,\ystar)\in X\times Y$ of $f$ clearly satisfies
\begin{equation}\label{eq:saddle2}
\max_{\x\in X}f(\x,\ystar) = f(\xstar,\ystar) = \min_{\y \in Y} f(\xstar,\y).
\end{equation}
In fact, \eqref{eq:saddle} is equivalent to \eqref{eq:saddle2}. In other words, $(\xstar,\ystar)$ is a saddle point if $\x\mapsto f(\x,\ystar)$ is maximized at $\xstar$, and $\y\mapsto f(\xstar,\y)$ is minimized at $\ystar$. 

When the supremum and infimum are attained, minimax theorems are equivalent to the problem of finding a \emph{saddle point}.
\begin{proposition}\label{prop:saddle}
Let $f:X\times Y \to \R$. If $f$ has a saddle point $(\xstar,\ystar)\in X\times Y$, then \eqref{eq:minimax} holds and $\xstar$ is an outer max of $f$, while $\ystar$ is an outer min of $f$. Conversely, if \eqref{eq:minimax} holds, $\xstar\in X$ is an outer max of $f$, and $\ystar\in Y$ is an outer min of $f$, then $(\xstar,\ystar)$ is a saddle point of $f$. 
\end{proposition}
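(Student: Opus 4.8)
The plan is to prove the two directions separately, using Lemma \ref{lem:minimaxless} to supply the ``easy'' inequality in each case and the saddle point condition \eqref{eq:saddle2} to supply the reverse.

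For the forward direction, suppose $(\xstar,\ystar)$ is a saddle point. First I would write down, starting from \eqref{eq:saddle2},
\[
\inf_{\y\in Y}\sup_{\x\in X} f(\x,\y) \le \sup_{\x\in X} f(\x,\ystar) = f(\xstar,\ystar) = \inf_{\y\in Y} f(\xstar,\y) \le \sup_{\x\in X}\inf_{\y\in Y} f(\x,\y),
\]
where the first inequality comes from plugging the particular point $\ystar$ into the infimum over $\y$, and the last from plugging $\xstar$ into the supremum over $\x$. Chaining this with Lemma \ref{lem:minimaxless} forces equality throughout, which gives \eqref{eq:minimax}; moreover the common value equals $f(\xstar,\ystar) = \inf_{\y\in Y} f(\xstar,\y)$, so the supremum on the left of \eqref{eq:minimax} is attained at $\xstar$, and it equals $\sup_{\x\in X} f(\x,\ystar)$, so the infimum on the right is attained at $\ystar$. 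Hence $\xstar$ is an outer max and $\ystar$ an outer min.

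For the converse, suppose \eqref{eq:minimax} holds with common value $v$, that $\xstar$ attains the left side, and $\ystar$ attains the right side. Then $\inf_{\y\in Y} f(\xstar,\y) = v = \sup_{\x\in X} f(\x,\ystar)$. From the first equality, $f(\xstar,\y)\ge v$ for all $\y\in Y$; from the second, $f(\x,\ystar)\le v$ for all $\x\in X$. Taking $\y=\ystar$ in the first and $\x=\xstar$ in the second gives $f(\xstar,\ystar)\ge v$ and $f(\xstar,\ystar)\le v$, so $f(\xstar,\ystar)=v$. Substituting $v = f(\xstar,\ystar)$ back into the two families of inequalities yields exactly \eqref{eq:saddle}, so $(\xstar,\ystar)$ is a saddle point.

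I do not anticipate a genuine obstacle here; the only thing to be careful about is bookkeeping with suprema and infima — specifically, remembering that $\sup$ and $\inf$ may a priori not be attained, so in the forward direction the attainment at $\xstar$ and $\ystar$ must be \emph{deduced} from the equality chain rather than assumed, and in the converse direction one must not silently treat $\inf_{\y} f(\xstar,\y)$ as a minimum until after establishing it is achieved at $\ystar$. Writing ``$\le$'' and ``$=$'' in the right places and invoking Lemma \ref{lem:minimaxless} to collapse the chain handles this cleanly.
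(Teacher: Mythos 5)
Your proof is correct and follows essentially the same route as the paper: in the forward direction you chain $\inf\sup \le \sup_{\x}f(\x,\ystar) = f(\xstar,\ystar) = \inf_{\y}f(\xstar,\y) \le \sup\inf$ and collapse it with Lemma \ref{lem:minimaxless}, and in the converse you use attainment at $\xstar$ and $\ystar$ together with \eqref{eq:minimax} to pin the common value at $f(\xstar,\ystar)$ and recover \eqref{eq:saddle}. The only difference from the paper is presentational (a single inequality chain versus two displayed inequalities), so nothing further is needed.
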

\begin{proof}
If $(\xstar,\ystar)\in X\times Y$ is a saddle point, so \eqref{eq:saddle} holds, then we have
\[\inf_{\y\in Y}\sup_{\x\in X}f(\x,\y) \leq \max_{\x\in X}f(\x,\ystar) = f(\xstar,\ystar),\]
and
\[\sup_{\x\in X}\inf_{\y\in Y}f(\x,\y) \geq \min_{\y\in Y}f(\xstar,\y) = f(\xstar,\ystar)\]
which yields \eqref{eq:opp_ineq}. By Lemma \ref{lem:minimaxless} we have that \eqref{eq:minimax} holds, and so 
\[\inf_{\y\in Y}\sup_{\x\in X}f(\x,\y) = f(\xstar,\ystar) = \sup_{\x\in X}\inf_{\y\in Y}f(\x,\y).\]
It follows that $\xstar$ is an outer max of $f$, while $\ystar$ is an outer min. 

Conversely, suppose that \eqref{eq:minimax} holds, and let $\xstar$ be an outer max of $f$ and $\ystar$ an outer min. This means that 
\[\sup_{\x\in X}\inf_{\y \in Y} f(\x,\y) = \inf_{\y \in Y}f(\xstar,\y) \leq f(\xstar,\ystar),\]
and
\[\inf_{\y \in Y}\sup_{\x\in X} f(\x,\y) = \sup_{\x\in X}f(\x,\ystar) \geq f(\xstar,\ystar).\]
Since \eqref{eq:minimax} holds we have
\[\sup_{\x\in X} f(\x,\ystar) = f(\xstar,\ystar) = \inf_{\y \in Y} f(\xstar,\y),\]
which implies that $(\xstar,\ystar)$ is a saddle point of $f$.
\end{proof}

\section{A Minimax Theorem}

We now give our main result. The proof follows Nikaid\^o's argument \cite{nikaido1959method}, with some modifications.
\begin{theorem}\label{thm:minimax}
Let $X\subset \R^d$ and $Y\subset \R^k$ be convex and compact sets with nonempty interiors. Let $f:X\times Y \to \R$ be continuous, and assume $\x\mapsto f(\x,\y)$ is concave for all $\y\in Y$ and $\y\mapsto f(\x,\y)$ is convex for all $\x\in X$.
Then $f$ has a saddle point $(\xstar,\ystar)\in X\times Y$.
\end{theorem}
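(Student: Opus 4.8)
The plan is to apply Proposition~\ref{prop:saddle}. Since $X$ and $Y$ are compact and $f$ is continuous (hence uniformly continuous on $X\times Y$), the function $\psi(\x):=\min_{\y\in Y}f(\x,\y)$ is well defined and continuous, and concave as a pointwise infimum of the concave functions $\x\mapsto f(\x,\y)$; likewise $\phi(\y):=\max_{\x\in X}f(\x,\y)$ is well defined, continuous, and convex. Consequently $\max_{\x\in X}\psi(\x)$ and $\min_{\y\in Y}\phi(\y)$ are attained, so $f$ has an outer max and an outer min, and by Proposition~\ref{prop:saddle} it suffices to prove the minimax equality~\eqref{eq:minimax}; by Lemma~\ref{lem:minimaxless} this reduces to the single inequality~\eqref{eq:opp_ineq}, i.e.\ to showing $\min_{\y\in Y}\phi(\y)\le\max_{\x\in X}\psi(\x)$.

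Next I would reduce to finitely many points by a compactness argument. Suppose \eqref{eq:opp_ineq} fails, and fix $c$ with $\max_{\x\in X}\psi(\x)<c<\min_{\y\in Y}\phi(\y)$. Every $\x\in X$ has $\psi(\x)<c$, so $f(\x,\y)<c$ for some $\y$; the open sets $\{\x\in X:f(\x,\y)<c\}$ thus cover $X$, and compactness yields $\y_1,\dots,\y_n\in Y$ with $\min_i f(\x,\y_i)<c$ for all $\x\in X$. Symmetrically, compactness yields $\x_1,\dots,\x_m\in X$ with $\max_j f(\x_j,\y)>c$ for all $\y\in Y$. Restrict $f$ to $X'\times Y'$, where $X'$ is the convex hull of $\x_1,\dots,\x_m$ and $Y'$ the convex hull of $\y_1,\dots,\y_n$: the hypotheses of the theorem persist, and at the maximizer $\x_0\in X'$ of $\x\mapsto\min_{\y\in Y'}f(\x,\y)$ the value is $\le\min_i f(\x_0,\y_i)<c$, while at the minimizer $\y_0\in Y'$ of $\y\mapsto\max_{\x\in X'}f(\x,\y)$ the value is $\ge\max_j f(\x_j,\y_0)>c$, so \eqref{eq:opp_ineq} fails on $X'\times Y'$ as well. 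Finally, precomposing $f$ with the affine maps $\Delta^{m-1}\to X'$, $s\mapsto\sum_{j=1}^m s_j\x_j$, and $\Delta^{n-1}\to Y'$, $t\mapsto\sum_{i=1}^n t_i\y_i$, preserves continuity, concavity in the first variable, and convexity in the second. Hence it suffices to prove the theorem when $X$ and $Y$ are simplices.

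I expect this finite-dimensional case to be the main obstacle. The natural line of attack is induction on $m+n$: if $m=1$ or $n=1$ then one simplex is a single point and \eqref{eq:minimax} is immediate. For the inductive step I would try to decrease $m+n$ by restricting to a facet of one of the simplices once the corresponding outer optimum is seen to lie on that facet, handling the complementary case --- where the outer optimum sits in the relative interior --- by singling out a vertex, writing $\y=(1-\lambda)\y'+\lambda\y_n$ with $\y'$ in the opposite facet and $\lambda\in[0,1]$, and exploiting convexity of $\lambda\mapsto f(\x,(1-\lambda)\y'+\lambda\y_n)$ together with the intermediate value theorem to locate a point at which the problem descends to a product involving a lower-dimensional simplex, where the inductive hypothesis applies. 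The delicate part will be the case analysis required to apply the intermediate value theorem --- tracking whether the relevant maximizers and minimizers lie at endpoints or in interiors --- which is precisely where continuity together with the concave/convex structure substitutes for a fixed point theorem. The hypothesis that $X$ and $Y$ have nonempty interior is not needed for the conclusion; it is harmless, and can be kept throughout by working with solid models of the simplices.
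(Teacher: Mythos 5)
Your preparatory reductions are correct, and they head in a genuinely different direction from the paper: existence of the outer max and outer min via compactness and continuity, the reduction through Proposition~\ref{prop:saddle} and Lemma~\ref{lem:minimaxless} to the single inequality~\eqref{eq:opp_ineq}, the finite-subcover argument producing $\y_1,\dots,\y_n$ and $\x_1,\dots,\x_m$ with the gap persisting on the convex hulls, and the affine pullback to simplices are all sound (the paper instead never discretizes: it minimizes $\Phi(\x,\y)=\int_Y\int_X(f(\u,\y)-f(\x,\v))_+^2\d\u\d\v$ and shows the minimum value is zero by a first-variation and skew-symmetry argument). But the reduction only repackages the statement: a continuous concave-convex function on a product of solid simplices is just another instance of Theorem~\ref{thm:minimax}, so the entire content of the theorem now sits in the step you yourself label the ``main obstacle,'' and for that step you offer a plan (``I would try\dots,'' ``the delicate part will be\dots''), not an argument. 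That is a genuine gap, not a detail to be routinely filled in.

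Moreover, the half of your proposed induction that is spelled out cannot carry any weight. If the outer min over the simplex $Y'$ is attained on a facet $F$, the inductive hypothesis applied to $X'\times F$ gives $\min_{\y\in F}\max_{\x\in X'}f=\max_{\x\in X'}\min_{\y\in F}f$; since $\min_{\y\in F}f(\x,\cdot)\ge\min_{\y\in Y'}f(\x,\cdot)$, this only reproduces the trivial inequality of Lemma~\ref{lem:minimaxless}, and the symmetric case of the outer max lying on a facet of $X'$ does the same. So the full burden falls on your ``relative interior'' case, whose essential content (already for two points) is Kneser's two-point lemma: if every $\x$ satisfies $\min\bigl(f(\x,\y_1),f(\x,\y_2)\bigr)<c$, then some $\y$ on the segment $[\y_1,\y_2]$ has $f(\x,\y)\le c$ for all $\x$. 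Convexity of $\lambda\mapsto f(\x,(1-\lambda)\y_1+\lambda\y_2)$ plus the intermediate value theorem does not yield this: $\lambda\mapsto\max_{\x}f(\x,\y_\lambda)$ is convex and may exceed $c$ on the whole segment, the maximizing $\x$ moves with $\lambda$, and the superlevel sets $\{\y : f(\x,\y)\ge c\}$ of a convex function are not convex; Example~\ref{Ex:difference} shows the claim is simply false when concavity in $\x$ over a convex $X$ is not used in an essential way. The known elementary treatments of exactly this point (Kneser, Fan, Komiya's proof of Sion's theorem) require a careful connectedness/limiting argument combining concavity in $\x$ with convexity in $\y$. Until you supply that argument, the proof is incomplete; if you do supply it, you will have an honest alternative to the paper's Nikaid\^o-style variational proof, along classical Kneser--Fan lines.
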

\begin{proof}
Let us define 
\[\Phi(\x,\y) = \int_Y \int_X (f(\u,\y) - f(\x,\v))_+^2 \d\u \d\v,\]
where $a_+ = \max(a,0)$.\footnote{Some care should be taken in defining the integral. Since $X$ and $Y$ are convex, they are measurable sets. Since they are compact and convex with \emph{nonempty} interiors, each is the closure of an open and bounded set, which has positive Lebesgue measure. Then we can write $\int_X f(\x)\d\x = \int_{\R^d}f(\x)\one_X(\x)\d\x$ for any continuous $f:X\to \R$, where $\one_X$ is the indicator function of the set $X$. In particular, if $f:X\to \R$ is continuous and \emph{nonnegative} and $\int_X f\d\x=0$, then $f\equiv 0$ on $X$.} Since $f$ is continuous on the compact set $X\times Y$, so is $\Phi$, and hence $\Phi$ attains its minimum at a point $(\xstar,\ystar)\in X\times Y$.
The proof will be completed by showing that $\Phi(\xstar,\ystar)=0$, from which we obtain that $(\xstar,\ystar)$ is a saddle point.

To see that $\Phi(\xstar,\ystar)=0$, we will take a variation of $\Phi$ that remains inside our convex domain $X\times Y$.
For any $0 < t < 1$ and $(\x,\y)\in X\times Y$ we use convexity/concavity of $f$ to obtain 
\begin{align*}
\Phi(\xstar,\ystar) &\leq \Phi(\xstar + t(\x-\xstar),\ystar + t(\y-\ystar))\\
&=\int_Y \int_X (f(\u,(1-t)\ystar + t\y) - f((1-t)\xstar + t\x,\v))_+^2 \d\u \d\v\\
&\leq \int_Y \int_X [\,(1-t)f(\u,\ystar) + tf(\u,\y) - (1-t)f(\xstar,\v) - tf(\x,\v)\,]_+^2 \d\u \d\v.
\end{align*}
Writing $g(\u,\v) = f(\u,\ystar)-f(\xstar,\v)$ and $h(\u,\v;\x,\y) =f(\u,\y)- f(\x,\v)$ for notational convenience, we can rearrange the above to obtain  
\begin{equation}\label{eq:key}
\int_Y\int_X [\, g(\u,\v) + t(h(\u,\v;\x,\y) - g(\u,\v))\,]_+^2 - g(\u,\v)_+^2\d\u \d\v \geq 0.
\end{equation}
We now divide by $t>0$, take the limit as $t\to 0^+$, and use that
\begin{align*}
\lim_{t\to 0^+}\frac{1}{t}\left[(a + tb)^2_+ - a^2_+\right]  = 2a_+ b
\end{align*}
for all $a,b\in\R$.
This yields
\[\int_Y\int_X g(\u,\v)_+ (h(\u,\v;\x,\y) - g(\u,\v)) \d\u \d\v \geq 0,\]
which can be rearranged to read
\begin{equation}\label{eq:variation}
\Phi(\xstar,\ystar) \leq \int_Y\int_X g(\u,\v)_+h(\u,\v;\x,\y)  \d\u \d\v.
\end{equation}
Multiply by $g(\x,\y)_+$ on both sides and integrate over $(\x,\y)\in X\times Y$ to obtain 
\begin{align*}
&\Phi(\xstar,\ystar)\int_Y\int_X g(\x,\y)_+\d\x \d\y \\
&\hspace{1in}\leq \int_Y\int_X\int_Y\int_X g(\u,\v)_+g(\x,\y)_+h(\u,\v;\x,\y)  \d\u \d\v\d\x \d\y=0
\end{align*}
since $h$ is skew-symmetric (i.e., $h(\u,\v;\x,\y) = -h(\x,\y;\u,\v)$).
Therefore $\Phi(\xstar,\ystar)=0$.
\end{proof}

In many applications of minimax theorems, one of the sets $X$ or $Y$ is unbounded, and in particular, not compact, though $f$ has more structure. The minimax theorem of Fan \cite{fan1953minimax} applies in this setting, though the proof is rather involved. Here, we take the simpler route of extending Theorem \ref{thm:minimax} to the unbounded case when $f$ has quadratic dependence on $\y$ and the mixed terms are linear. In particular, we assume now that $f$ has the form 
\begin{equation}\label{eq:quadf}
f(\x,\y) = \frac{1}{2}\y^TS\y - \y^TA\x - g(\x),
\end{equation}
for appropriate conditions on $S$, $A$, and $g$. In particular, we prove the following result.
\begin{theorem}\label{thm:minimax_unbounded}
Let $X\subset \R^d$ be convex and compact with nonempty interior and assume $\o\in X$. Let $f:X\times \R^k \to \R$ be given by \eqref{eq:quadf}, where $g:X\to \R$ is continuous and convex, $S=S^T$ is a $k\times k$ symmetric and positive semidefinite matrix, and $A$ is any $k\times d$ matrix.
Then it holds that
\begin{equation}\label{eq:saddle_quad}
\inf_{\y\in \R^k}\max_{\x\in X} f(\x,\y) = \max_{\x\in X}\inf_{\y\in \R^k}f(\x,\y).
\end{equation}
\end{theorem}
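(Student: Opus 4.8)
The plan is to reduce to Theorem \ref{thm:minimax} by truncating the $\y$-variable to a ball and then letting the radius tend to infinity. For $R>0$ set $Y_R=\{\y\in\R^k:|\y|\le R\}$, which is convex and compact with nonempty interior. For each fixed $\y$ the map $\x\mapsto f(\x,\y)$ is concave, being the sum of the affine map $\x\mapsto-\y^TA\x$, the concave map $-g$, and a constant; and for each fixed $\x$ the map $\y\mapsto f(\x,\y)$ is convex, being the sum of the quadratic $\y\mapsto\frac{1}{2}\y^TS\y$ (convex because $S$ is positive semidefinite), the affine map $\y\mapsto-\y^TA\x$, and a constant. Since $f$ is continuous, Theorem \ref{thm:minimax} and Proposition \ref{prop:saddle} apply on $X\times Y_R$ and yield
\begin{equation}\label{eq:compactminimax}
m_R:=\inf_{\y\in Y_R}\max_{\x\in X}f(\x,\y)=\max_{\x\in X}\inf_{\y\in Y_R}f(\x,\y).
\end{equation}

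Next I would pass to the limit $R\to\infty$ in both representations of $m_R$. Since $Y_R$ increases to $\R^k$, the infimum of any function over $Y_R$ decreases to its infimum over $\R^k$; applied to $\y\mapsto\max_{\x\in X}f(\x,\y)$ this identifies the left-hand side of \eqref{eq:saddle_quad} with $\lim_{R\to\infty}m_R$. Applied instead pointwise in $\x$, and writing $\psi_R(\x)=\inf_{\y\in Y_R}f(\x,\y)$ and $\phi(\x)=\inf_{\y\in\R^k}f(\x,\y)$, it shows $\psi_R$ decreases to $\phi$ as $R\to\infty$. Each $\psi_R$ is continuous on $X$, since $f$ is uniformly continuous on the compact set $X\times Y_R$ and hence $|\psi_R(\x)-\psi_R(\x')|\le\sup_{\y\in Y_R}|f(\x,\y)-f(\x',\y)|$; therefore $\phi$, a decreasing limit of continuous functions, is upper semicontinuous on the compact set $X$, so $\max_{\x\in X}\phi$ is attained. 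Moreover $\phi(\o)=\inf_{\y\in\R^k}\bigl(\frac{1}{2}\y^TS\y-g(\o)\bigr)=-g(\o)$ because $S$ is positive semidefinite, so $\max_{\x\in X}\phi\ge-g(\o)>-\infty$. Using the second representation in \eqref{eq:compactminimax}, namely $m_R=\max_{\x\in X}\psi_R$, the identity \eqref{eq:saddle_quad} reduces to the claim that $\lim_{R\to\infty}m_R=\max_{\x\in X}\phi$.

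To prove this I would argue as in the proof of Dini's theorem. Since $\psi_R\ge\phi$ we have $m_R\ge\max_{\x\in X}\phi$ for every $R$, and $m_R$ is non-increasing in $R$, so $m_R$ decreases to a limit $L\ge\max_{\x\in X}\phi>-\infty$. For the reverse inequality, pick $\x_R\in X$ attaining $m_R=\psi_R(\x_R)$; by compactness of $X$ there is a sequence $R_n\to\infty$ with $\x_{R_n}\to\bar\x\in X$. Fixing any $R'>0$, for all $n$ with $R_n\ge R'$ the monotonicity of $\psi$ in $R$ gives $m_{R_n}=\psi_{R_n}(\x_{R_n})\le\psi_{R'}(\x_{R_n})$, and letting $n\to\infty$ and using continuity of $\psi_{R'}$ yields $L\le\psi_{R'}(\bar\x)$. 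Since this holds for every $R'>0$, we obtain $L\le\inf_{R'>0}\psi_{R'}(\bar\x)=\phi(\bar\x)\le\max_{\x\in X}\phi$, hence $L=\max_{\x\in X}\phi$, which is \eqref{eq:saddle_quad}.

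The main obstacle is the limit $R\to\infty$: the limiting function $\phi$ need not be continuous (it equals $-\infty$ at any $\x$ for which $A\x$ is not in the range of $S$), so the interchange $\lim_R\max_{\x\in X}\psi_R=\max_{\x\in X}\lim_R\psi_R$ is not automatic, and the Dini-type argument above is needed to justify it. The hypotheses $\o\in X$ and $S$ positive semidefinite enter exactly to guarantee that $\phi(\o)=-g(\o)$ is finite, which keeps $m_R$ bounded below and the limit $L$ finite, so the argument does not degenerate.
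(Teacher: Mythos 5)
Your argument is correct, but it takes a genuinely different route from the paper. The paper exploits the quadratic structure of \eqref{eq:quadf} directly: for each $\x$ the inner problem $\inf_{\y\in\R^k}f(\x,\y)$ is finite exactly when $A\x\in\img S$, in which case the normal equations $S\y=A\x$ give a minimizer in $\img S$ with the a priori bound $\|\y\|\le \|A\|\max_{\x\in X}\|\x\|/\sigma_{min}(S)$; thus the inner infimum over $\R^k$ can be replaced by a minimum over one fixed ball $B_R$, Theorem \ref{thm:minimax} is applied a single time on $X\times B_R$, and Lemma \ref{lem:minimaxless} finishes the proof. You instead apply Theorem \ref{thm:minimax} on $X\times Y_R$ for every radius $R$ and pass to the limit $R\to\infty$, handling the interchange of limit and maximum with a Dini-type compactness argument and the upper semicontinuity of $\phi=\inf_{\y\in\R^k}f(\cdot,\y)$ as a decreasing limit of continuous functions; your limiting step is carried out correctly (monotonicity of $m_R$, continuity of each $\psi_{R'}$, extraction of a convergent sequence of maximizers), and the attainment of $\max_X\phi$ plus finiteness via $\phi(\o)=-g(\o)$ covers what the theorem asserts. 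What each approach buys: the paper's proof is shorter and stays entirely elementary because the explicit quadratic form yields a uniform truncation radius in one stroke; yours never solves the inner problem and uses the structure of $f$ only for concavity/convexity and for finiteness at $\o$, so it in effect proves a more general Fan/Sion-type statement (compact convex $X$, $Y=\R^k$, continuous concave--convex $f$), at the price of a semicontinuity and limiting argument that is a bit heavier than anything else in this note. One small correction: positive semidefiniteness of $S$ is not used ``exactly'' for finiteness at $\o$ as you say at the end --- it is also what makes $\y\mapsto f(\x,\y)$ convex, which you need to invoke Theorem \ref{thm:minimax} on each $X\times Y_R$ (you do note this earlier).
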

\begin{proof}
For any fixed $\x\in X$, $\y\mapsto f(\x,\y)$ is a quadratic function which has a global minimizer if and only if $A\x\in \img S$.  This holds, for example, when $\x=\o$. In the case $A\x\in \img S$, every minimizer $\y$ satisfies $S\y = A\x$ and there is a unique minimizer $\y\in \img S$, which satisfies  
\[\sigma_{min}(S) \|\y\| \leq \|S\y\| = \|A\x\| \leq \|A\| \|\x\| \leq C \|A\| \ \ \text{where}  \ \ C=\max_{\x\in X}\|\x\|,\]
where $\sigma_{min}(S)>0$ is the smallest (positive) singular value of $S$.
Thus, when $A\x\in \img S$, $\y\mapsto f(\x,\y)$ admits its minimum value over $\R^k$ on the closed ball $B_R$ of radius $R = C\|A\|/\sigma_{min}(S)$.  When $A\x\not\in \img S$ we have $\inf_{\y\in \R^k}f(\x,\y) = -\infty$, so the maximum over $\x\in X$ will avoid such choices. It follows that the outer max of $f$ is attained and we have 
\[\max_{\x\in X}\inf_{\y\in \R^k}f(\x,\y) = \max_{\x\in X}\min_{\y\in B_R}f(\x,\y).\]

We can now apply Theorem \ref{thm:minimax} to find that 
\[\max_{\x\in X}\inf_{\y\in \R^k}f(\x,\y) = \min_{\y\in B_R}\max_{\x\in X}f(\x,\y) \geq \inf_{\y\in \R^k}\max_{\x\in X}f(\x,\y),\]
which when combined with Lemma \ref{lem:minimaxless} completes the proof.
\end{proof}

\begin{remark}\label{rem:saddlequad}
By Theorem \ref{thm:minimax_unbounded}, $f$ has an outer max $\xstar\in X$, and $A\xstar \in \img S$. If $f$ has an outer min $\ystar\in \R^k$ as well, then by Proposition \ref{prop:saddle}, the pair $(\xstar,\ystar)\in X\times \R^k$ is a saddle point, that is 
\begin{equation}\label{eq:saddle3}
\min_{\y\in \R^k}\max_{\x\in X} f(\x,\y) = \max_{\x\in X}f(\x,\ystar) = f(\xstar,\ystar) = \min_{\y \in \R^k} f(\xstar,\y)=\max_{\x\in X}\inf_{\y\in \R^k}f(\x,\y).
\end{equation}
We also note that we can clearly restrict the outer max to the set 
\[X_0 = \{\x\in X \st A\x\in \img S\},\]
in which case the inner infimum is a true minimum and we can write
\begin{equation}\label{eq:saddle4}
\min_{\y\in \R^k}\max_{\x\in X} f(\x,\y) = \max_{\x\in X}f(\x,\ystar) = f(\xstar,\ystar) = \min_{\y \in \R^k} f(\xstar,\y)=\max_{\x\in X_0}\min_{\y\in \R^k}f(\x,\y).
\end{equation}
\end{remark}

\subsection*{Acknowledgements}

We thank Markus Schweighofer for discussions that led to this note.

\bibliography{main}
\bibliographystyle{abbrv}

\end{document}